\documentclass[a4paper,11pt]{amsart}
\pagestyle{myheadings}
\usepackage{amsmath,amssymb,amsfonts,amsthm}
\usepackage{latexsym,mathrsfs}
\usepackage{graphicx}
\usepackage[all]{xy}
\input xypic
\usepackage{color}
\usepackage[backref=false]{hyperref}
\hypersetup{colorlinks=true,linkcolor=red,citecolor=blue}
\usepackage[OT2,T1]{fontenc}
\usepackage[utf8]{inputenc}
\usepackage{lmodern}
\usepackage{microtype}
\usepackage{enumerate}
\usepackage{tikz-cd}
\usepackage{bm}
\theoremstyle{plain}
\newtheorem{theorem}[subsection]{{\bf Theorem}}
\newtheorem*{theorem*}{{\bf Theorem}}
\newtheorem{corollary}[subsection]{{\bf Corollary}}
\newtheorem*{corollary*}{{\bf Corollary}}
\newtheorem{proposition}[subsection]{{\bf Proposition}}
\newtheorem{lemma}[subsection]{{\bf Lemma}}

\theoremstyle{definition}

\theoremstyle{remark}

\numberwithin{equation}{subsection}

\DeclareMathOperator{\HH}{H}
\DeclareMathOperator{\K}{K}
\DeclareMathOperator{\B}{B}

\DeclareMathOperator{\Hom}{Hom}

\DeclareMathOperator{\res}{res}
\DeclareMathOperator{\cor}{cor}
\DeclareMathOperator{\conj}{conj}

\DeclareBoldMathCommand{\bbot}{\bot}

\DeclareSymbolFont{cyrletters}{OT2}{wncyr}{m}{n}
\DeclareMathSymbol{\Sha}{\mathalpha}{cyrletters}{"58}

\newcommand{\QZ}{\mathbb{Q}/\mathbb{Z}}
\newcommand{\Z}{\mathbb{Z}}

\begin{document}
\title[Local control and Bogomolov multipliers]{Local control and Bogomolov multipliers of finite groups}
\author{Primo\v z Moravec}
\address{{
Faculty of  Mathematics and Physics, University of Ljubljana,
and Institute of Mathematics, Physics and Mechanics,
Slovenia}}
\email{primoz.moravec@fmf.uni-lj.si}
\subjclass[2020]{20D20, 20J06}
\keywords{Local control, Bogomolov multiplier.}
\thanks{ORCID: \url{https://orcid.org/0000-0001-8594-0699}. The author acknowledges the financial support from the Slovenian Research Agency, research core funding No. P1-0222, and projects No. J1-3004, N1-0217, J1-2453, J1-1691.}
\date{\today}
\begin{abstract}
\noindent
 We show that if a Sylow $p$-subgroup of a finite group $G$ is nilpotent of class at most $p$, then the $p$-part of the Bogomolov multiplier of $G$ is locally controlled.
\end{abstract}
\maketitle
\section{Introduction}
\label{s:intro}

\noindent
In this note we prove the following result:

\begin{theorem}
    \label{thm:bog}
    Let $G$ be a finite group and $P$ a Sylow $p$-subgroup of $G$. If the nilpotency class of $P$ does not exceed $p$, then the Bogomolov multipliers of $G$ and $N_G(P)$ have isomorphic  Sylow $p$-subgroups.
\end{theorem}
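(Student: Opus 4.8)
The plan is to transfer the problem to the Sylow $p$-subgroup $P$, to identify the $p$-part of the Bogomolov multiplier with a group of ``unramified stable classes'' on $P$, and then to play Alperin's fusion theorem off against the hypothesis on the class of $P$. Recall that for any finite group $K$ the Bogomolov multiplier $\B_0(K)$ is the subgroup of $H^2(K,\QZ)$ of classes restricting trivially to every (bicyclic, hence every) abelian subgroup. For $H\le K$, restriction maps $\B_0(K)$ into $\B_0(H)$ because abelian subgroups of $H$ are abelian in $K$; and, expanding $\res^K_A\circ\cor^K_H$ by the Mackey double coset formula, every summand is the corestriction (to a subgroup of $A$) of an element of $\B_0(\,{}^gH)$ that has first been restricted to an abelian subgroup of the abelian group $A$, so it vanishes, whence corestriction maps $\B_0(H)$ into $\B_0(K)$. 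Since $\cor^K_H\circ\res^K_H$ is multiplication by $[K:H]$, the map $\res^K_H$ is a split monomorphism on $p$-primary parts as soon as $p\nmid[K:H]$. As $[G:N_G(P)]\equiv 1\pmod p$ and $[N_G(P):P]$, $[G:P]$ are prime to $p$, we obtain split monomorphisms $\B_0(G)_{(p)}\hookrightarrow\B_0(N_G(P))_{(p)}\hookrightarrow\B_0(P)$ with composite $\res^G_P$; so it suffices to show that the (obviously nested) images of $\B_0(G)_{(p)}$ and of $\B_0(N_G(P))_{(p)}$ in $\B_0(P)$ coincide.

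I would then identify these images via stable elements. By Cartan--Eilenberg, $\res^G_P\big(H^2(G,\QZ)_{(p)}\big)$ is the group of $G$-stable classes in $H^2(P,\QZ)$, while, as $P$ is normal in $N_G(P)$, $\res^{N_G(P)}_P\big(H^2(N_G(P),\QZ)_{(p)}\big)=H^2(P,\QZ)^{\mathrm{Aut}_G(P)}$. Using that an abelian subgroup of $G$ has its $p$-part conjugate into $P$, one checks that the same descriptions survive after intersecting with $\B_0$, i.e.
\[
\res^G_P(\B_0(G)_{(p)})=\B_0(P)\cap\{\text{$G$-stable classes}\}
\quad\text{and}\quad
\res^{N_G(P)}_P(\B_0(N_G(P))_{(p)})=\B_0(P)^{\mathrm{Aut}_G(P)}.
\]
Thus the theorem is equivalent to the assertion that \emph{every $\mathrm{Aut}_G(P)$-invariant class in $\B_0(P)$ is automatically $G$-stable}. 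By Alperin's fusion theorem, $G$-stability of a class $\beta$ can be tested on a conjugation family; a class of $\B_0(P)$ restricts to $0$ on every abelian subgroup of $P$ and is automatically invariant under $\mathrm{Aut}_P(Q)$ for each $Q\le P$, so the only possible failure of stability occurs at a \emph{nonabelian} $\mathcal F$-essential subgroup $Q$, through the action of $\mathrm{Out}_{\mathcal F}(Q)$ on $\res^P_Q\beta\in\B_0(Q)$.

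The crux — and the step I expect to be the genuine obstacle — is to rule this out using the class hypothesis: one must show that when $P$ has nilpotency class at most $p$, every $\mathcal F$-essential subgroup $Q$ of $P$ is abelian (so that $\res^P_Q\beta\in\B_0(Q)=0$, the obstruction disappears, $\beta$ is $G$-stable, and the two images in $\B_0(P)$ agree, forcing $\B_0(G)_{(p)}\cong\B_0(N_G(P))_{(p)}$). Here the two defining features of an essential subgroup pull against the class bound: on one side, $\mathrm{Aut}_P(Q)=N_P(Q)/C_P(Q)$ stabilizes a chain of subgroups of $Q$ of length at most $p$, since $[Q;N_P(Q),\dots,N_P(Q)]\le\gamma_{p+1}(P)=1$, which tightly restricts its nilpotent structure; on the other side, $\mathrm{Out}_{\mathcal F}(Q)$ must have a strongly $p$-embedded subgroup whose Sylow $p$-subgroup is the image of $\mathrm{Aut}_P(Q)$, and the classification of groups with a strongly $p$-embedded subgroup — together with the constraints that $Q$ being $\mathcal F$-centric and $\mathcal F$-radical imposes on the action of $\mathrm{Out}_{\mathcal F}(Q)$ on $Q$ — leaves essentially no room for a nonabelian $Q$ once the class is at most $p$. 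Making this dichotomy precise, and in particular explaining why $p$ is exactly the right threshold rather than $p-1$, is the technical heart of the proof.
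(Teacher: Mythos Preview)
Your setup through the reformulation is sound and parallels the paper: restriction and corestriction preserve $\B_0$, the $p$-part of $\B_0(G)$ is the group of $G$-stable elements in $\B_0(P)$, and $\B_0(N_G(P))_{(p)}$ corresponds to the $\Aut_G(P)$-invariants. The gap is the mechanism you propose for closing the argument. The assertion that every $\mathcal F$-essential subgroup of $P$ is abelian once $P$ has class at most $p$ is \emph{false}. Take $G=Sp_4(p)$ for $p$ odd: the Sylow $p$-subgroup $U$ has order $p^4$ and class $3\le p$, but the unipotent radical $Q=U_\alpha U_{\alpha+\beta}U_{2\alpha+\beta}$ of the long-root maximal parabolic is extraspecial of order $p^3$, hence nonabelian, and it is $\mathcal F$-centric with $\mathrm{Out}_{\mathcal F}(Q)$ containing $SL_2(p)$, so it is $\mathcal F$-essential. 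Thus the route ``essential $\Rightarrow$ abelian $\Rightarrow\B_0(Q)=0$'' cannot work as stated. One might hope instead to show that $\B_0(Q)=0$ for every essential $Q$, or that $\res^P_Q\beta$ is automatically $\Aut_{\mathcal F}(Q)$-invariant; neither is what you argue, neither follows from the chain-stabilisation remark you make about $\Aut_P(Q)$, and the invocation of the classification of groups with strongly $p$-embedded subgroups is not a proof.

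The paper avoids fusion-system combinatorics on essential subgroups altogether. Following Holt, it works with a free presentation $1\to R\to F_1\to Q\to 1$ and the Hopf-type description $\B_0(Q)=(F_1'\cap R)/\langle\K(F_1)\cap R\rangle$. The key new input (Proposition~\ref{prop:Zc}) is that if $[Q,{}_cG]=1$ then $[\B_0(Q),{}_{c-1}G]=1$: expanding $[F_1'\cap R,{}_{c-1}F]$ via Lemma~\ref{lem:comm} produces commutators of the form $[[x,a_1,\dots,a_i],[y,b_1,\dots,b_{c-1-i}]]$, which lie in $R$ by the hypothesis and are \emph{single commutators of elements of $F_1$}, hence die in $\B_0(Q)$ even though they need not lie in $[R,F]$. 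This is exactly why the threshold improves from Holt's $p/2$ for the Schur multiplier (where one only kills $[R,F]$) to $p$ for the Bogomolov multiplier. With this in hand, Holt's machinery (characteristic $p$-functors strongly controlling fusion, his Lemma~8) finishes the proof verbatim. Your approach never engages this presentation-level description of $\B_0$, which is where the class-$p$ bound actually comes from.
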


This theorem is related to the following result of Holt \cite{Hol77}:

\begin{theorem}[\cite{Hol77}]
    \label{thm:holt}
    Let $G$ be a finite group and $P$ a Sylow $p$-subgroup of $G$. If the nilpotency class of $P$ does not exceed $p/2$, then the Schur multipliers of $G$ and $N_G(P)$ have isomorphic Sylow $p$-subgroups.
\end{theorem}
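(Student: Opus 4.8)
The plan is to prove the theorem through the Cartan--Eilenberg description of the $p$-primary part of a homology group as its group of \emph{stable elements}. Writing $M(H) = \HH_2(H;\Z)$ for the Schur multiplier and $N = N_G(P)$, restriction to $P$ embeds the $p$-parts $M(G)_{(p)}$ and $M(N)_{(p)}$ into $M(P)$, with images precisely the $G$-stable and the $N$-stable elements respectively. Since $N \le G$, every $G$-stable element is $N$-stable, so the map $\res^G_N\colon M(G)_{(p)} \to M(N)_{(p)}$ induced by inclusion is the inclusion of one subgroup of $M(P)$ into another, and is in particular injective. The whole content of the theorem is therefore the reverse inclusion: \emph{under the hypothesis $\cl(P)\le p/2$, every $N$-stable element of $M(P)$ is already $G$-stable.}

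First I would make the stability conditions explicit and local. By Alperin's fusion theorem, the $G$-fusion of subsets of $P$ is generated by conjugations inside the $p$-local subgroups $N_G(Q)$, where $Q$ ranges over suitable (tame-intersection) subgroups of $P$ together with $Q=P$; the contribution of $N_G(P)$ is exactly the $N$-stability already assumed. Thus it suffices to verify the stability equation $\res^P_{N_P(Q)}(x) = (c_g)_*\,\res^{P}_{N_P(Q)}(x)$ for each such $Q$ and each $g\in N_G(Q)$, and a standard coprime reduction inside $N_G(Q)/Q$ lets me assume $g$ is a $p'$-element normalizing the Sylow subgroup $N_P(Q)$ of $N_G(Q)$. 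In this way the global comparison is reduced to a purely local question about the action of a single coprime automorphism on the Schur multiplier of a $p$-group of class at most $p/2$.

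To resolve the local question I would pass to the nonabelian exterior square, using the exact sequence $0 \to M(P) \to P\wedge P \to [P,P]\to 1$, which turns the cohomological stability condition into a commutator identity in $P\wedge P$. The coprime automorphism $g$ acts on the whole lower central filtration, and the point is that when $\cl(P)\le p/2$ the Hall--Petrescu collection formula expresses products and $p$-th powers in $P$, and the induced bracket operations in $P\wedge P$, with coefficients that are $p$-adic units up to the degree that the exterior square can reach. Concretely, $P\wedge P$ is built from \emph{pairwise} commutators, so its relevant filtration has length roughly $2\,\cl(P)$; the bound $\cl(P)\le p/2$ is exactly what keeps $2\,\cl(P)\le p$ and hence guarantees that a Lazard-type logarithm linearizes the $g$-action and that no $p$-divisible correction terms intervene. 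With linearization in hand, the fixed points and the restriction maps for the coprime action are determined by the abelianized degree-one data, which is controlled by $N_P(Q)$ and therefore already encoded in the $N$-stability of $x$; this forces the local stability equation and completes the reduction.

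The main obstacle I anticipate is precisely this last step: controlling the coprime action on $P\wedge P$ and proving that the $N$-stable data propagate to every local subgroup. The difficulty is bookkeeping the Hall--Petrescu coefficients on the exterior square and verifying that the bound $\cl(P)\le p/2$, rather than merely $\cl(P)<p$, is both necessary and sufficient to annihilate the $p$-torsion obstructions; getting the degree count right for the \emph{quadratic} exterior construction, and checking that the essential-subgroup conjugations of Alperin's theorem interact correctly with the linearization, is where the real work lies.
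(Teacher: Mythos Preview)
Note first that the paper does not itself prove this theorem: it is quoted from \cite{Hol77} as background, and no argument is given. That said, the paper's proof of Theorem~\ref{thm:bog} is an explicit adaptation of Holt's proof, so Holt's strategy can be read off from Section~\ref{s:proof}. It runs as follows: (i) the Cartan--Eilenberg stable-element description (Lemma~\ref{lem:holtLemma2}), which you share; (ii) a reduction from $G$ to $N_G(W(P))$ for a characteristic $p$-functor $W$ that \emph{strongly controls fusion} (the Schur-multiplier analogue of Corollary~\ref{cor:myThm1}); (iii) a commutator calculation carried out directly in a free presentation of $Q$ (cf.\ Lemma~\ref{lem:comm} and Proposition~\ref{prop:Zc}), showing that if $[Q,{}_cG]=1$ then the $G$-action on $M(Q)$ is killed after roughly $2c-1$ iterations---this factor of $2$ is exactly where the bound $\cl(P)\le p/2$ enters; (iv) a semidirect-product argument (Holt's Lemma~7, mirrored in Corollary~\ref{cor:NG}) and a final lemma (Holt's Lemma~8) to pass from $N_G(W(P))$ to $N_G(P)$. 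There is no Alperin factorisation, no exterior square, and no Hall--Petrescu/Lazard machinery.

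Your plan replaces (ii) by Alperin's fusion theorem and (iii)--(iv) by linearisation on $P\wedge P$. The stable-element setup and the intuition that ``$2\cdot\cl(P)\le p$'' is the crux are both right. But there is a genuine gap at the junction between your local reduction and your use of $N$-stability. After Alperin you must check, for each relevant $Q$ and each $p'$-element $g\in N_G(Q)$ normalising $N_P(Q)$, that the restriction of $x$ is $g$-fixed. You assert this follows because the fixed points ``are determined by the abelianised degree-one data \dots\ already encoded in the $N$-stability of $x$''. But $N$-stability only records how $N_G(P)$ acts on $M(P)$; it says nothing a~priori about how an unrelated $p'$-automorphism of a proper subgroup $N_P(Q)$ acts on the restricted class, and a coprime automorphism can certainly act nontrivially on the multiplier of a small-class $p$-group. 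The passage of information from $N_G(P)$ to the other $p$-local subgroups $N_G(Q)$ is precisely what Glauberman's functorial control of fusion supplies in Holt's argument (step~(ii) above), and your outline offers no substitute for it. Your own final paragraph correctly identifies this as the obstacle; as it stands it is not a bookkeeping issue with Hall--Petrescu coefficients but a missing structural ingredient.
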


In fact, the majority of the proof of Theorem \ref{thm:bog} is an adaptation of Holt's argument. The crucial difference is the step where the bound on the nilpotency class of the Sylow $p$-subgroup is improved from Holt's $p/2$ in the Schur multiplier case, to $p$ in the Bogomolov multiplier case. Note that this does not improve the bound in Theorem \ref{thm:holt}, it merely shows that one can relax it when passing to Bogomolov multipliers.

The outline of the paper is as follows. We first provide some preliminaries in Section \ref{s:prelim}. Then we proceed to the proof of Theorem \ref{thm:bog}. To keep the exposition short, we only include the details where our argument differs from \cite{Hol77}, and refer to {\it loc. cit.} for the rest.
\section{Preliminaries}
\label{s:prelim}

\noindent
Most of the notations follow \cite{Bro82}. The maps and actions are always written from the right.

Let $G$ be a finite group. The second homology group $\HH^2(G,\QZ)$ is the {\it Schur multiplier} of $G$. If $1\to R\to F\to G\to 1$ is a free presentation of $G$, then $\HH^2(G,\QZ)$ is naturally isomorphic to $\Hom (M(G),\QZ)$, where $M(G)=(F'\cap R)/[R,F]\cong \HH_2(G,\Z)$, see, e.g., \cite[p. 42, p. 145]{Bro82}.

Given a $G$-module $M$, denote
$$\Sha^n(G,M)=\bigcap _{\tiny\begin{matrix}A\le G,\\ A \hbox{ abelian}\end{matrix}} \ker (\res ^G_A: \HH^n(G,M )\to \HH^n(A,M )).$$
Bogomolov \cite{Bog87} studied the group $\Sha^2(G,\QZ)$
in relationship with Noether's problem from invariant theory. This group is nowadays called the {\it Bogomolov multiplier} of $G$. It is shown in \cite[Proposition 3.8]{Mor12} that if $1\to R\to F\to G\to 1$ is a free presentation of $G$, then $\Sha^2(G,\QZ)$ is naturally isomorphic to $\Hom (\B_0(G),\QZ)$, where $B_0(G)=(F'\cap R)/\langle \K(F)\cap R\rangle$. Here $\K(F)$ stands for the {\it set} of all commutators $[x,y]$, where $x,y\in F$. 

Let a finite group $G$ act on a finite group $Q$. Then $G$ also acts on $\HH^2(Q,\QZ)$ via the rule $(c+\B^2(Q,\QZ))g=c'+\B^2(Q,\QZ)$, where the cocycle $c':Q\times Q\to\QZ$ is given by the rule $c'(q_1,q_2)=c(q_1g^{-1},q_2g^{-1})$. Via a free presentation $Q\cong F/R$ of $Q$, we have an action of $G$ on $M(Q)$, given as follows. Let $F$ be free on $X$. Take an isomorphism $\phi :F/R\to Q$, and let $g\in G$ and $x\in X$. Pick $y_x\in F$ with the property that $(xR)\phi g \phi^{-1}=y_xR$. This gives rise to an endomorphism $\psi :F\to F$ that sends $x$ to $y_x$. Note that $R$ is $\psi$-invariant. Thus $\psi$ induces an action of $G$ on $M(Q)$ via $(r[R,F])g=(r\psi)[R,F]$, where $r\in F'\cap R$.
It is not difficult to show that $\HH^2(Q,\QZ)$ and $\Hom(M(Q),\QZ)$ become isomorphic as $G$-modules.

The Bogomolov multiplier $\Sha^2 (Q,\QZ)$ is a subgroup of $\HH^2(Q,\QZ)$. Take $g\in G$ and $\alpha\in\Sha ^2(Q,\QZ)$. Take an arbitrary abelian subgroup $A$ of $Q$. Then $Ag^{-1}$ is abelian, hence $\alpha\res^G_{Ag^{-1}}=0$. By definition, $(\alpha g)\res^G_A=0$. Thus $\Sha^2(Q,\QZ)$ is a submodule of the $G$-module $\HH^2(Q,\QZ)$.

Let $\phi : F/R\to Q$ be a free presentation of $Q$, and let $\psi$ be as above. Denote $M_0(Q)=\langle \K(F)\cap R\rangle / [R,F]$. Take $g\in G$ and $x,y\in F$ with $[x,y]\in R$. Then $([x,y][R,F])g=([x,y]\psi )[R,F]=[x\psi,y\psi][R,F]$. As $R\psi\subseteq R$, it follows that $[x,y]\psi\in \K(F)\cap R$. This shows that $M_0(Q)$ is a submodule of the $G$-module $M(Q)$, hence $\B_0(Q)=M(Q)/M_0(Q)$ becomes a $G$-module. Similarly as above, $\Sha^2(Q,\QZ)$ and $\Hom (\B_0(Q),\QZ)$ are isomorphic as $G$-modules.

We will also need a couple of auxiliary results on commutator subgroups. The notations follow \cite[p. 119]{Rob82}.
\begin{lemma}
    \label{lem:comm}
    Let $A$ and $B$ be normal subgroups of a group $G$.
    \begin{enumerate}
        \item $[A,A,{}_nB]\le\prod\limits_{i=1}^n[[A,{}_iB],[A,{}_{n-i}B]]$ for all $n\ge 1$.
        \item $[\gamma_n(B),A]\le [A,{}_nB]$ for all $n\ge 1$.
    \end{enumerate}
\end{lemma}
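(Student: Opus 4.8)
The plan is to deduce both parts from the three subgroups lemma in its ``product'' form: if $X$, $Y$, $Z$ are normal subgroups of $G$, then $[X,Y,Z]\le[Y,Z,X]\cdot[Z,X,Y]$ (a standard consequence of the Hall--Witt identity; see \cite{Rob82}). The point to keep in mind throughout is that, since $A$ and $B$ are normal in $G$, every iterated commutator subgroup occurring below --- each $[A,{}_iB]$, each $\gamma_j(B)$, the subgroup $[A,B]$ --- is again normal in $G$. Hence the displayed form of the lemma applies to them, products of such subgroups are again subgroups (independent of order and of repeated factors), and one may freely use monotonicity $[H,K]\le[H',K']$ for $H\le H'$, $K\le K'$, together with $[H_1H_2,K]\le[H_1,K][H_2,K]$ for normal $H_1,H_2,K$.

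For part (2) I would induct on $n$, the case $n=1$ being the identity $[\gamma_1(B),A]=[A,B]=[A,{}_1B]$. For the inductive step I would write $[\gamma_{n+1}(B),A]=[[\gamma_n(B),B],A]$ and apply the lemma with $(X,Y,Z)=(\gamma_n(B),B,A)$ to obtain
$$[\gamma_{n+1}(B),A]\le[[B,A],\gamma_n(B)]\cdot[[A,\gamma_n(B)],B].$$
The second factor equals $[[\gamma_n(B),A],B]$, which lies in $[[A,{}_nB],B]=[A,{}_{n+1}B]$ by the inductive hypothesis; the first factor equals $[\gamma_n(B),[A,B]]$, which by the inductive hypothesis applied with the normal subgroup $[A,B]$ \emph{in place of} $A$ lies in $[[A,B],{}_nB]=[A,{}_{n+1}B]$. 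The mild subtlety here is that one must phrase the induction uniformly over all normal subgroups in the first slot, so that the hypothesis may be re-fed with $[A,B]$.

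For part (1) I would again induct on $n$. When $n=1$, the lemma with $(X,Y,Z)=(A,A,B)$ gives $[A,A,B]\le[[A,B],A]\cdot[[B,A],A]=[[A,B],A]$, which is exactly the right-hand side. For the step, the inductive hypothesis and monotonicity yield
$$[A,A,{}_{n+1}B]=[[A,A,{}_nB],B]\le\prod_{i=1}^n[[[A,{}_iB],[A,{}_{n-i}B]],B],$$
and applying the lemma to each factor with $(X,Y,Z)=([A,{}_iB],[A,{}_{n-i}B],B)$ bounds the right-hand side by
$$\prod_{i=1}^n\bigl([[A,{}_{i+1}B],[A,{}_{n-i}B]]\cdot[[A,{}_{n-i+1}B],[A,{}_iB]]\bigr).$$
Setting $k=i+1$ in the first type of factor and $k=i$ in the second (and using $[H,K]=[K,H]$), one checks that as $i$ runs over $1,\dots,n$ these factors run through $[[A,{}_kB],[A,{}_{(n+1)-k}B]]$ for every $k\in\{1,\dots,n+1\}$ --- the middle values $k=2,\dots,n$ with multiplicity two, $k=1$ contributed by the second type at $i=1$, and $k=n+1$ contributed by the first type at $i=n$ (the factor $[[A,{}_{n+1}B],A]$). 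Collapsing repetitions gives the claimed bound $\prod_{i=1}^{n+1}[[A,{}_iB],[A,{}_{(n+1)-i}B]]$.

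I expect the only real difficulty to be organizational rather than mathematical: one must keep careful track of which subgroups are normal in $G$ so that the product form of the three subgroups lemma is legitimate, and in part (1) handle the index shifts carefully --- in particular verifying that the two extreme indices $k=1$ and $k=n+1$ are actually produced. Everything else is a routine commutator computation.
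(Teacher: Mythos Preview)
Your proof is correct. For part (2) you follow exactly the paper's argument: induction on $n$, Three Subgroup Lemma applied to $[\gamma_n(B),B,A]$, and the same observation that the hypothesis must be stated uniformly in the first slot so it can be reapplied with $[A,B]$ in place of $A$. For part (1) the paper gives no argument at all and simply cites \cite[Lemma 6]{Hol77}, whereas you supply a complete inductive proof; your index bookkeeping is correct, in particular the extreme terms $k=1$ and $k=n+1$ are indeed produced.
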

    \begin{proof}
        The item (1) is proved in \cite[Lemma 6]{Hol77}. We prove (2) by induction on $n$. The case $n=1$ is clear. Suppose the claim holds for some $n\ge 1$ and all normal subgroups $A$ and $B$ of $G$. Then the Three Subgroup Lemma \cite[5.1.10]{Rob82} implies $[\gamma_{n+1}(B),A]=[\gamma_n(B),B,A]\le [B,A,\gamma_n(B)][A,\gamma_n(B),B]$. By induction assumption we have $[B,A,\gamma_n(B)]=[[B,A],\gamma_n(B)]\le [[B,A],{}_nB]=[A,{}_{n+1}B]$. In addition to that, 
        $[A,\gamma_n(B),B]=[[A,\gamma_n(B)],B]\le [A,{}_{n+1}B]$, again by induction assumption. This proves the result. 
    \end{proof}
\section{Proof of Theorem \ref{thm:bog}}
\label{s:proof}

\noindent
Let $H$ be a subgroup of a finite group $G$. Let $M$ be a $G$-module. Given $g\in G$, the conjugation map $H\to H^g$ induces an isomorphism $\conj^g_H: \HH^n(H,M)\to \HH^n(H^g,M)$. According to \cite{Hol77},  we say that $\alpha\in\HH^n(H,M)$ is {\it stable} with respect to $G$ (also {\it $G$-invariant} according to \cite{Bro82}) if $$\alpha\res^H_{H\cap H^g}=\alpha\conj^g_H\res^{H^g}_{H\cap H^g}$$ for every $g\in G$. The following is a crucial property of stable elements:

\begin{lemma}[\cite{Hol77}, Lemma 2]
    \label{lem:holtLemma2}
    Let $p$ be aprime, and let $H$ be a subgroup of $G$ of index coprime to $p$. Let $P_G$ and $P_H$ be the Sylow $p$-subgroups of $\HH^n(G,M)$ and $\HH^n(H,M)$, respectively. Then $P_G$ is isomorphic to the group of stable elements of $P_H$, which is a direct factor of $P_H$.
\end{lemma}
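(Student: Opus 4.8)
The plan is to run the classical transfer argument, using the hypothesis that the index $m=[G:H]$ is prime to $p$ so that multiplication by $m$ is invertible on every finite abelian $p$-group. Fix an integer $m'$ with $mm'\equiv 1$ modulo a power of $p$ large enough that $mm'$ acts as the identity on $P_G$ and on $P_H$ (which are the unique Sylow $p$-subgroups of $\HH^n(G,M)$ and $\HH^n(H,M)$, finite in the cases where the lemma is applied, e.g. $M=\QZ$); note also that $\res^G_H$ and $\cor^H_G$, being homomorphisms, carry elements of $p$-power order to elements of $p$-power order. The aim is to show that $\res^G_H$ restricts to an isomorphism of $P_G$ onto the group $S$ of stable elements of $P_H$, and that $S$ is a direct summand of $P_H$.

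First I would check that every element of the image of $\res^G_H$ is stable with respect to $G$: for $\alpha\in\HH^n(G,M)$ and $g\in G$, transitivity of restriction gives $\alpha\res^G_H\res^H_{H\cap H^g}=\alpha\res^G_{H\cap H^g}$, while the naturality of conjugation and the fact that inner automorphisms act trivially on $\HH^n(G,M)$ give $\alpha\res^G_H\conj^g_H\res^{H^g}_{H\cap H^g}=\alpha\conj^g_G\res^G_{H^g}\res^{H^g}_{H\cap H^g}=\alpha\res^G_{H\cap H^g}$; comparing the two identities shows $\alpha\res^G_H$ is stable. Applying this to $\alpha\in P_G$ shows $P_G\res^G_H\subseteq S$, and $\res^G_H$ is injective on $P_G$ because $\res^G_H\cor^H_G$ is multiplication by $m$, which is invertible on $P_G$. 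The heart of the matter is surjectivity. Here I would invoke the Mackey double coset formula, which writes $\beta\cor^H_G\res^G_H$ as a sum over the double cosets $HgH$ of composites of the form ``restrict $\beta$ to $H\cap H^g$, conjugate by $g$, corestrict back up to $H$''. When $\beta\in S$ is stable, the stability relation lets one absorb the conjugation step, so that the $HgH$-term reduces to $\beta\res^H_{H\cap H^g}\cor^{H\cap H^g}_H=[H:H\cap H^g]\,\beta$; summing over the double cosets and using $\sum_{HgH}[H:H\cap H^g]=[G:H]$ gives $\beta\cor^H_G\res^G_H=m\beta$. Consequently, for stable $\beta\in S$ the element $\alpha:=m'(\beta\cor^H_G)$ lies in $P_G$ and satisfies $\alpha\res^G_H=m'm\beta=\beta$, so $\res^G_H\colon P_G\to S$ is surjective, hence an isomorphism.

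For the direct-summand claim, I would consider the map $\rho\colon P_H\to P_H$, $\gamma\mapsto m'(\gamma\cor^H_G\res^G_H)$. Applying the first step to $\gamma\cor^H_G\in\HH^n(G,M)$ shows $\rho(\gamma)\in S$, while the Mackey computation above shows $\rho$ is the identity on $S$; hence $\rho$ is an idempotent endomorphism of $P_H$ with image $S$, giving $P_H=S\oplus\ker\rho$. I expect the only genuine obstacle to be setting up the Mackey double coset formula correctly in the ``maps on the right'' convention and checking that stability is exactly what makes the general double-coset term collapse to $[H:H\cap H^g]\,\beta$; everything else is formal manipulation of restriction and corestriction. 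This is precisely Holt's \cite[Lemma 2]{Hol77}, so in the paper one simply refers to it there.
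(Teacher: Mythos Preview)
Your proposal is correct and is the standard transfer/Mackey argument underlying Holt's Lemma~2; the paper does not supply its own proof of this lemma but simply cites \cite{Hol77}, exactly as you note in your final sentence. There is nothing to compare: your write-up reconstructs the classical proof that the paper is content to quote.
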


We first show that an analogous result holds for $\Sha^n$. Let $G$ and $H$ be as in Lemma \ref{lem:holtLemma2}. let $S_G$ and $S_H$ be the Sylow $p$-subgroups of $\Sha^n(G,M)$ and $\Sha^n(H,M)$, respectively. Then $S_G\le P_G$ and $S_H\le P_H$. Let $\rho$ be the restriction of the map $\res^G_H$ to $S_G$, and let $\sigma$ be the restriction of the map $\cor^G_H$ to $S_H$. Pick $\alpha\in\Sha ^n(G,M)$, and let $A$ be an abelian subgroup of $H$. Then $\alpha\res^G_H\res^H_A=\alpha\res^G_A=0$, hence $\alpha\res^G_H\in\Sha^n(H,M)$. This shows that $\rho$ maps $S_G$ into $S_H$. Similarly, if $\beta\in\Sha^n(H,M)$ and if $A$ is an abelian subgroup of $G$, then \cite[Proposition 9.5, Chapter III]{Bro82} gives
$$\beta\cor^G_H\res^G_A=\sum_{s\in\mathcal{S}}\beta\conj_H^s\res^{H^s}_{H^s\cap A}\cor^{A}_{H^s\cap A},$$
where $\mathcal{S}$ is a complete set of representatives of double cosets $HgA$, where $g\in G$. Note that $\beta\conj_H^s\in\Sha^n(H^s,M)$, and, as $H^s\cap A$ is an abelian subgroup of $H^s$, we get $\beta\conj_H^s\res^{H^s}_{H^s\cap A}=0$. Thus the above formula implies $\beta\cor^G_H\in\Sha^n(G,M)$. Hence $\sigma$ maps $S_H$ into $S_G$. As $|G:H|$ is not divisible by $p$, it follows from \cite[Proposition 9.5, Chapter III]{Bro82} that $\rho\sigma=|G:H|\cdot 1$ is an automorphism of $\Sha^n(G,M)$. Similarly as in \cite{Hol77} we can now show the following: 
\begin{lemma}
    \label{lem:myLemma2}
    Let $p$ be a prime, and let $H$ be a subgroup of $G$ of index coprime to $p$. Let $S_G$ and $S_H$ be the Sylow $p$-subgroups of $\Sha^n(G,M)$ and $\Sha^n(H,M)$, respectively. Then $S_G$ is isomorphic to to the group of stable elements of $S_H$, which is a direct factor of $S_H$.
\end{lemma}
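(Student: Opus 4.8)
The plan is to adapt the proof of Lemma \ref{lem:holtLemma2} from \cite{Hol77}, using the maps $\rho$ and $\sigma$ just constructed in place of the restriction and corestriction maps on $\HH^n$. First I would record that since $\rho\sigma = |G:H|\cdot 1$ is an automorphism of $\Sha^n(G,M)$, and hence restricts to an automorphism of its Sylow $p$-subgroup $S_G$ (noting $\rho$ and $\sigma$ are module maps, so they respect the $p$-primary decomposition), $\rho$ maps $S_G$ injectively into $S_H$ and $\sigma$ maps $S_H$ onto a complement of $\ker\sigma|_{S_H}$; consequently $S_G \cong \operatorname{im}(\rho|_{S_G})$ is a direct factor of $S_H$. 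It remains to identify $\operatorname{im}(\rho|_{S_G})$ with the group of stable elements of $S_H$.

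The inclusion $\operatorname{im}(\rho|_{S_G}) \subseteq \{\text{stable elements of } S_H\}$ is the standard fact that restrictions of classes from $G$ are stable (see \cite[Chapter III]{Bro82}): for $\alpha \in \Sha^n(G,M)$ and $g \in G$, both $\alpha\res^G_H\res^H_{H\cap H^g}$ and $\alpha\res^G_H\conj^g_H\res^{H^g}_{H\cap H^g}$ equal $\alpha\res^G_{H\cap H^g}$, since $\res$ is functorial and compatible with conjugation by elements of $G$. For the reverse inclusion, let $\beta \in S_H$ be stable; the double-coset formula for $\cor^G_H\res^G_H$ from \cite[Proposition 9.5, Chapter III]{Bro82}, combined with stability, gives $\beta\cor^G_H\res^G_H = \bigl(\sum_{s\in\mathcal S} 1\bigr)\beta = |G:H|\cdot\beta$, exactly as in Holt's argument. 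Since $\beta \in \Sha^n(H,M)$ we have already shown $\beta\cor^G_H = \beta\sigma \in \Sha^n(G,M)$, and as $\beta$ is $p$-torsion, $\beta\sigma$ lies in $S_G$. Then $(\beta\sigma)\rho = \beta\cor^G_H\res^G_H = |G:H|\cdot\beta$, so $\beta$ is $|G:H|$ times an element of $\operatorname{im}(\rho|_{S_G})$; since $|G:H|$ is invertible on the $p$-group $S_H$, $\beta$ itself lies in $\operatorname{im}(\rho|_{S_G})$.

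The main point requiring care — though it is not really an obstacle, since all the pieces were assembled in the paragraph preceding the statement — is checking that $\rho$ and $\sigma$ genuinely land in the $\Sha$-subgroups and interact correctly with the grading by $p$-primary components; that verification was carried out above using that $H^s\cap A$ is abelian for every abelian $A \le G$, which kills the relevant restriction terms in the double-coset formula. Given that, the argument is a verbatim transcription of \cite[Lemma 2]{Hol77} with $\HH^n$ replaced by $\Sha^n$ throughout, so I would simply remark that the proof of Lemma \ref{lem:holtLemma2} applies \emph{mutatis mutandis}.
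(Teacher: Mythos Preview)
Your proposal is correct and follows essentially the same route as the paper: the paragraph preceding the lemma establishes that $\rho$ and $\sigma$ land in the $\Sha$-groups and that $\rho\sigma$ is an automorphism, after which the paper simply defers to Holt's argument, exactly as you do. One minor slip worth fixing: the intermediate equality $\beta\cor^G_H\res^G_H=\bigl(\sum_{s\in\mathcal S}1\bigr)\beta$ is not literally true in general (each double-coset term contributes $|H:H\cap H^s|\cdot\beta$, and it is the sum of these indices that equals $|G:H|$), though your final value $|G:H|\cdot\beta$ is correct.
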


Lemma \ref{lem:myLemma2} can be used to prove the following counterpart of \cite[Theorem 1]{Hol77}, with proof being a straightforward adaptation of Holt's argument:

\begin{corollary}
    \label{cor:myThm1}
    Let $G$ be a finite group and $P$ a Sylow $p$-subgroup of $G$. Let $M$ be a trivial $G$-module. Let $W$ be a characteristic $p$-functor which strongly controls fusion in $G$. Then the Sylow $p$-subgroups of $\Sha^n(G,M)$ and $\Sha^n(N_G(W(P)),M)$ are isomorphic.
\end{corollary}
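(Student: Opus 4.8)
The plan is to mimic Holt's proof of \cite[Theorem 1]{Hol77} verbatim, substituting Lemma \ref{lem:myLemma2} for Lemma \ref{lem:holtLemma2} wherever the argument invokes the fact that the Sylow $p$-subgroup of the cohomology of $G$ sits inside that of a subgroup of $p'$-index as the subgroup of stable elements. First I would recall the relevant notion: a \emph{characteristic $p$-functor} $W$ assigns to each finite $p$-group $Q$ a nontrivial characteristic subgroup $W(Q)\le Q$, and $W$ \emph{strongly controls fusion} in $G$ (with $P\in\mathrm{Syl}_p(G)$) if, setting $N=N_G(W(P))$, every $G$-conjugacy between subsets of $P$ is realized by an element of $N$, i.e.\ for $U,V\le P$ and $g\in G$ with $U^g=V$ there is $n\in N$ and $c\in C_G(U)$ with $g=cn$; equivalently the fusion systems $\mathcal F_P(G)$ and $\mathcal F_P(N)$ coincide. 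Standard examples (Glauberman's $K^\infty$, the Thompson subgroup in appropriate situations) make this hypothesis non-vacuous, but for the proof only the formal consequence is needed.

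Next, the core of the argument. Since $W(P)$ is characteristic in $P$ and $P\in\mathrm{Syl}_p(G)$, one has $P\le N$ and $[G:N]$ is coprime to $p$; the same holds with $G$ replaced by any subgroup containing $P$. By Lemma \ref{lem:myLemma2}, $S_G$ is isomorphic to the group of $G$-stable elements of $S_P$, and $S_N$ is isomorphic to the group of $N$-stable elements of $S_P$; both identifications are induced by $\res^G_P$ and $\res^N_P$ respectively, compatibly. Since every $N$-stable element is visibly $G$-stable only after one knows $G$-fusion is $N$-fusion, the content is: an element $\alpha\in S_P$ is $G$-stable if and only if it is $N$-stable. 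The implication ``$G$-stable $\Rightarrow$ $N$-stable'' is immediate because $N\le G$. For the converse, take $g\in G$; strong control of fusion gives $g=cn$ with $n\in N$ and $c\in C_G(P\cap P^{g^{-1}n})$ — more precisely one applies the fusion-control hypothesis to the subgroup $U=P\cap P^{g^{-1}}$ — and then $\conj^g_P$ restricted to $P\cap P^g$ factors, on cohomology, through $\conj^n_P$ because conjugation by the centralizing part $c$ acts trivially on $\HH^n(P\cap P^g,M)$ (here $M$ is a trivial module, so inner automorphisms act trivially, which is exactly where triviality of $M$ is used). Hence the $N$-stability condition $\alpha\res^P_{P\cap P^n}=\alpha\conj^n_P\res^{P^n}_{P\cap P^n}$ upgrades to the $G$-stability condition. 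This shows the two groups of stable elements in $S_P$ coincide, whence $S_G\cong S_N$.

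The main obstacle — and the only place requiring genuine care rather than transcription — is bookkeeping the double-coset and restriction identities so that the equality ``$G$-stable $=$ $N$-stable inside $S_P$'' is clean: one must check that the stability condition can be tested on the subgroups $P\cap P^g$ alone (not on arbitrary overgroups), that the factorization $g=cn$ can be arranged with the centralizing factor $c$ centralizing precisely the subgroup on which we restrict, and that $\conj$, $\res$, $\cor$ interact as in \cite[Chapter III]{Bro82}. All of this is present in \cite{Hol77} for the cohomology case and transfers mutatis mutandis once Lemma \ref{lem:myLemma2} is in hand; the triviality of $M$ is what makes inner automorphisms disappear. I would therefore write the proof as: ``This follows from Lemma \ref{lem:myLemma2} exactly as \cite[Theorem 1]{Hol77} follows from \cite[Lemma 2]{Hol77}; the only modification is that $\HH^n$ and its stable elements are replaced throughout by $\Sha^n$ and its stable elements, which is legitimate since $\Sha^n(-,M)$ is a submodule stable under $\res$, $\cor$, and $\conj$ as shown above,'' and then include at most a couple of lines spelling out the $G$-stable $\Leftrightarrow$ $N$-stable equivalence via strong control of fusion.
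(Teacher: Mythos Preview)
Your proposal is correct and matches the paper's approach exactly: the paper gives no proof beyond the remark that Lemma \ref{lem:myLemma2} can be substituted for \cite[Lemma 2]{Hol77} in a straightforward adaptation of Holt's argument, which is precisely what you outline. Your expanded sketch of the $G$-stable $\Leftrightarrow$ $N$-stable step (and the role of the triviality of $M$) is more detail than the paper provides, but it is the right argument.
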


Our next result is a key step in proving Theorem \ref{thm:bog}. 

\begin{proposition}
    \label{prop:Zc}
    Let $Q$ be a normal subgroup of a finite group $G$. Suppose that $[Q,{}_cG]=1$ for some $c\ge 1$. Then
    $[\Sha^2(Q,\QZ),{}_{c-1}G]=1$.
\end{proposition}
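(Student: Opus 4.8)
The plan is to work with the presentation-theoretic description $\Sha^2(Q,\QZ)\cong\Hom(\B_0(Q),\QZ)$ given in Section \ref{s:prelim}, where $\B_0(Q)=M(Q)/M_0(Q)$ with $M(Q)=(F'\cap R)/[R,F]$ and $M_0(Q)=\langle\K(F)\cap R\rangle/[R,F]$, for a free presentation $1\to R\to F\to Q\to 1$. Since $G$ acts on $\B_0(Q)$ through an endomorphism $\psi$ of $F$ lifting the action on $Q$, and since $\Hom(-,\QZ)$ is an exact contravariant functor that turns the lower central action on $\B_0(Q)$ into the dual one on $\Sha^2(Q,\QZ)$, it suffices to prove the \emph{homological} statement $[\B_0(Q),{}_{c-1}G]=1$; dualizing then gives $[\Sha^2(Q,\QZ),{}_{c-1}G]=1$. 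So I would first record this reduction carefully: an element $g\in G$ acts trivially on $\B_0(Q)$ iff $(r\psi_g)r^{-1}\in M_0(Q)$ (i.e.\ lies in $\langle\K(F)\cap R\rangle$ modulo $[R,F]$) for all $r\in F'\cap R$, and iterated commutators $[\,\cdot\,,{}_{c-1}G]$ on $\B_0(Q)$ correspond to iterating these $\psi_g-\mathrm{id}$ operators.

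The second step is the group-theoretic heart. I would form the semidirect-type setup used by Holt: embed everything into a single group $\widehat G$ in which $Q$ is realized together with its $G$-action, so that $[Q,{}_cG]=1$ literally holds inside $\widehat G$ with $Q,G$ normal-ish subgroups, and then pull the relations back to $F$. Concretely, working modulo $[R,F]$ inside $F$, the action endomorphisms $\psi_g$ satisfy: for $x\in F$, $x^{-1}(x\psi_g)\in$ (a lift of $[Q,G]$), and more generally the $n$-fold ``commutator'' of $x$ with $g_1,\dots,g_n\in G$ lands in a lift of $[Q,{}_nG]$, which is trivial for $n=c$. Thus for $r\in F'\cap R$ a word in commutators $[x,y]$, applying $\psi_{g_1}-\mathrm{id},\dots,\psi_{g_{c-1}}-\mathrm{id}$ and using the commutator identities — in particular Lemma \ref{lem:comm}(1), which controls $[A,A,{}_nB]$ by products of $[[A,{}_iB],[A,{}_{n-i}B]]$, and Lemma \ref{lem:comm}(2) — I would push the resulting element into the subgroup generated by commutators $[u,v]$ with $u\in R$ (or with $u$ lying in a high-enough term of the relevant lower central series so that $[u,v]\in\langle\K(F)\cap R\rangle$ modulo $[R,F]$). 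The point of passing to $\B_0(Q)$ rather than $M(Q)$ is exactly that these ``extra'' commutator terms, which Holt had to kill using $[Q,{}_cG]=1$ with the stronger class bound, are now \emph{automatically} in $M_0(Q)$ and hence vanish in $\B_0(Q)$; this is the mechanism by which the class bound relaxes from $p/2$ to $p$.

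The third step is bookkeeping: set up the filtration of $F'\cap R$ by the terms $[R,{}_iF]$ (and the relevant $\gamma_j$), show that $\psi_g-\mathrm{id}$ shifts this filtration up by one (modulo $M_0(Q)$), and check that after $c-1$ applications every generator of $M(Q)$ modulo $M_0(Q)$ is sent into $M_0(Q)$, i.e.\ $[\B_0(Q),{}_{c-1}G]=1$. The main obstacle I expect is precisely verifying that each $\psi_g-\mathrm{id}$ really does raise the filtration degree by exactly one on $\B_0(Q)$ — the naive estimate on $M(Q)$ is off by the commutator-of-commutator terms, and one must invoke Lemma \ref{lem:comm}(1) to see that those terms, which have the form $[[A,{}_iB],[A,{}_{n-i}B]]$, are products of plain commutators $[x,y]$ and therefore die in $\B_0(Q)$. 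Once that filtration-shift lemma is in hand, the $c-1$-fold iteration and the final dualization back to $\Sha^2$ are routine. I would also double-check the edge case $c=1$ (where the statement asserts nothing, since $[\Sha^2(Q,\QZ),{}_0G]=\Sha^2(Q,\QZ)$ and indeed $[Q,{}_1G]=1$ forces $Q$ central, but not trivial — so here one should read $[\,\cdot\,,{}_0G]$ as the trivial constraint and the proposition is vacuous) and $c=2$ (where it says $G$ acts trivially on $\Sha^2(Q,\QZ)$ whenever $[Q,G]\le Z(Q)$... actually $[Q,{}_2G]=1$), to make sure the indexing matches.
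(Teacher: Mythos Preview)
Your reduction to $[\B_0(Q),{}_{c-1}G]=1$ is correct and matches the paper, and the ingredients you single out --- Lemma~\ref{lem:comm}(1) to expand $[A,A,{}_nB]$, together with the observation that the resulting factors are simple commutators of elements of $F_1$ lying in $R$ and hence vanish in $\B_0(Q)$ --- are exactly the ones the paper uses.

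Where your plan diverges, and becomes unnecessarily complicated, is in the choice of free presentation. You take $1\to R\to F\to Q\to 1$ presenting $Q$, encode the $G$-action through endomorphisms $\psi_g$ of $F$, and then try to run a filtration argument with operators $\psi_g-\mathrm{id}$. But your proposed filtration ``by the terms $[R,{}_iF]$'' lives entirely inside the presentation of $Q$ and does not see the $G$-action at all; modulo $[R,F]$ it has only one nontrivial step, so the claim that $\psi_g-\mathrm{id}$ shifts it by one cannot be made precise as stated. You gesture at fixing this via an auxiliary group $\widehat G$, but never say what it is or how the pull-back to $F$ works.

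The paper bypasses all of this with one clean choice: take a free presentation $1\to R\to F\to G\to 1$ of $G$ (not of $Q$), and let $F_1\trianglelefteq F$ be the preimage of $Q$. Then $1\to R\to F_1\to Q\to 1$ presents $Q$, the $G$-action on $\B_0(Q)$ is literally conjugation by $F$, and the hypothesis becomes $[F_1,{}_cF]\le R$. Now $[\B_0(Q),{}_{c-1}G]$ is the image of the honest group commutator $[F_1'\cap R,{}_{c-1}F]$, and Lemma~\ref{lem:comm}(1) gives
\[
[F_1'\cap R,{}_{c-1}F]\ \le\ [F_1,F_1,{}_{c-1}F]\ \le\ \prod_{i=1}^{c-1}\bigl[[F_1,{}_iF],[F_1,{}_{c-i-1}F]\bigr].
\]
Each factor is generated by commutators $[u,v]$ with $u,v\in F_1$ (normality of $F_1$), and Lemma~\ref{lem:comm}(2) shows each factor lies in $[F_1,{}_cF]\le R$; hence $[F_1'\cap R,{}_{c-1}F]\le\langle\K(F_1)\cap R\rangle$ and the proof is complete. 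No filtration bookkeeping, no $\psi_g-\mathrm{id}$ operators, no auxiliary $\widehat G$.

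A small correction to your edge-case check: for $c=1$ the statement is not vacuous. By convention $[\,\cdot\,,{}_0G]$ is the whole group, so the conclusion reads $\Sha^2(Q,\QZ)=0$; and indeed $[Q,G]=1$ with $Q\le G$ forces $Q$ abelian, whence $\Sha^2(Q,\QZ)=0$.
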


\begin{proof}
    It suffices to show that $[B_0(Q),{}_{c-1}G]=1$. Let $1\to R\to F\to G\to 1$ be a free presentation of $G$. Then $Q$ has a free presentation of the form $1\to R\to F_1\to Q\to 1$, where $F_1\trianglelefteq F$. By assumption we have that $[F_1,{}_cF]\le R$. Now Lemma \ref{lem:comm} (1) gives
    \begin{align*}
        [F_1'\cap R,{}_{c-1}F] &\le [F_1,F_1,{}_{c-1}F] \\
        &\le \prod_{i=1}^{c-1}[[F_1,{}_iF],[F_1,{}_{c-i-1}F]].
    \end{align*}
    Consider a commutator of the form
    $$\omega =[[x,a_1,\ldots ,a_i],[y,b_1,\ldots ,b_{c-i-1}]],$$
    where $x,y\in F_1$, $a_1,\ldots a_i,b_1, b_{c-i-1}\in F$, $1\le i\le c-1$.
    As $F_1$ is a normal subgroup of $F$, we have that $\omega\in\K(F_1)$. On the other hand, Lemma \ref{lem:comm} yields that $[[F_1,{}_iF],[F_1,{}_{c-i-1}F]]\le [F_1,{}_iF,\gamma_{c-i}F]\le [F_1,{}_cF]\le R$, therefore $\omega \in R$. This shows that
    $[F_1'\cap R,{}_{c-1}F]\le\langle \K(F_1)\cap R\rangle$, hence the result.
\end{proof}

The following result can be proved by essentially repeating the argument of the second part of the proof of \cite[Lemma 7]{Hol77}:

\begin{corollary}
    \label{cor:NG}
    Let $G$ be a finite group and $P$ a Sylow $p$-subgroup of $G$. Suppose that the nilpotency class of $P$ does not exceed $p$. Let $Q$ be a normal subgroup of $G$. If $Q$ is a $p$-group, then $[\Sha^2(Q,\QZ),G]=[\Sha_2(Q,\QZ),N_G(P)]$. 
\end{corollary}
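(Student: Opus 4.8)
The plan is to follow the second part of the proof of \cite[Lemma 7]{Hol77} essentially verbatim, with the Schur multiplier replaced by the Bogomolov multiplier; the single place where Holt's hypothesis $\cl(P)\le p/2$ is used gets replaced by an appeal to Proposition \ref{prop:Zc}, which only requires $\cl(P)\le p$. Write $M=\Sha^2(Q,\QZ)$, a finite abelian $p$-group which, by the discussion in Section \ref{s:prelim}, is a $\Z G$-module (with $Q$ acting trivially, since inner automorphisms act trivially on cohomology). The inclusion $[M,N_G(P)]\le[M,G]$ is clear, so only the reverse inclusion needs proof.

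\emph{Step 1 (the only new ingredient).} As $Q\trianglelefteq G$ is a $p$-group we have $Q\le O_p(G)\le P$, so $Q\trianglelefteq P$; and $\cl(P)\le p$ gives $\gamma_{p+1}(P)=1$, hence $[Q,{}_pP]\le\gamma_{p+1}(P)=1$. Applying Proposition \ref{prop:Zc} with $P$ in place of $G$ therefore yields $[M,{}_{p-1}P]=1$: the Sylow $p$-subgroup $P$ acts on $M$ nilpotently of class at most $p-1$. (This is precisely where \cite{Hol77} needs $\cl(P)\le p/2$: the Schur-multiplier analogue of Proposition \ref{prop:Zc} loses a factor of two, as the commutator computation in its proof shows. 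From here on $M$ enters only as an abstract module, and the remaining steps are Holt's.)

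\emph{Step 2 (reduction to simple modules).} It suffices to prove: if a finite abelian $p$-group $M$ is a $\Z G$-module on which $P\in\mathrm{Syl}_p(G)$ acts with $[M,{}_{p-1}P]=1$, then $[M,G]=[M,N_G(P)]$. Dualising by $\Hom(-,\QZ)$ turns this into the equivalent statement $C_M(G)=C_M(N_G(P))$ and preserves the hypothesis on the $P$-action, so we induct on $|M|$. If $P$ acts trivially on $M$ then $P\le C_G(M)$, and a Frattini argument applied to $P\in\mathrm{Syl}_p(C_G(M))$ gives $G=C_G(M)N_G(P)$, whence $C_M(N_G(P))=C_M(G)$. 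Otherwise, pick a proper non-zero $G$-submodule $M_1\le M$. Given $m\in C_M(N_G(P))$, its image in $M/M_1$ lies in $C_{M/M_1}(N_G(P))=C_{M/M_1}(G)$ by induction, so $g\mapsto m(g-1)$ is a $1$-cocycle valued in $M_1$, trivial on $N_G(P)$; since $[G:N_G(P)]$ is prime to $p$, the restriction $\HH^1(G,M_1)\to\HH^1(N_G(P),M_1)$ is injective, so this cocycle is a coboundary $g\mapsto m_1(g-1)$ with $m_1\in M_1$, necessarily $m_1\in C_{M_1}(N_G(P))=C_{M_1}(G)$, and then $m-m_1\in C_M(G)$. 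This reduces everything to the case where $M$ is a simple $\mathbb{F}_pG$-module on which $P$ acts non-trivially — so $M$ is non-trivial and $C_M(G)=0$.

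\emph{Step 3 (base case, the crux).} One must show that a non-trivial simple $\mathbb{F}_pG$-module $M$ with $[M,{}_{p-1}P]=1$ admits no non-zero $N_G(P)$-fixed vector. Since the fixed points of the normal $p$-subgroup $O_p(G)$ form a non-zero $G$-submodule, $O_p(G)$ acts trivially; replacing $G$ by its image in $\operatorname{Aut}(M)$ we may thus assume $G$ acts faithfully with $O_p(G)=1$ (indeed faithfulness then forces $\cl(P)\le p-2$, since $\gamma_{p-1}(P)-1\subseteq I_P^{\,p-1}$ annihilates $M$, where $I_P\subseteq\mathbb{F}_pP$ is the augmentation ideal). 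At this point one runs the fusion-theoretic argument of \cite[Lemma 7]{Hol77} unchanged; this is where the bound on $\cl(P)$ — equivalently, that $M$ is a module over $\mathbb{F}_pP/I_P^{\,p-1}$ — is genuinely exploited to exclude such a fixed vector. Steps 1 and 2 being formal, Step 3 is the main obstacle; but since it is inherited intact from \cite{Hol77}, all that really has to be checked is that Holt's argument uses $M$ only through being a finite abelian $p$-group with $[M,{}_{p-1}P]=1$, which is exactly what Step 1 guarantees.
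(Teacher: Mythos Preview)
Your proposal is correct and follows essentially the same route as the paper. The paper's own proof is extremely terse: it forms the semidirect product $H=G\ltimes\Sha^2(Q,\QZ)$ with Sylow $p$-subgroup $S=P\ltimes\Sha^2(Q,\QZ)$, invokes Proposition~\ref{prop:Zc} to obtain $[\Sha^2(Q,\QZ),{}_{p-1}P]=1$ from $\cl(P)\le p$, and then simply says ``the rest of the proof now follows the lines of \cite[Proof of Lemma 7]{Hol77}.'' Your Step~1 is exactly this key application of Proposition~\ref{prop:Zc} (and you spell out explicitly why $Q\le P$ and $[Q,{}_pP]=1$, which the paper leaves implicit); your Steps~2--3 are just an expanded sketch of what ``follow Holt'' entails, which the paper does not bother to write out. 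The only cosmetic difference is that the paper packages the module data into the semidirect product $H$ before handing off to Holt, whereas you work module-theoretically until the very end; since Holt's argument itself passes through that semidirect product, this is a distinction without a difference.
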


\begin{proof}
    Form $H=G\ltimes \Sha^2(Q,\QZ)$. Then $S=P\ltimes \Sha^2(Q,\QZ)$ is a Sylow $p$-subgroup of $H$. As $P$ is nilpotent of class $\le p$, Proposition \ref{prop:Zc} implies $[\Sha^2(Q,\QZ),{}_{p-1}P]=1$. The rest of the proof now follows the lines of \cite[Proof of Lemma 7]{Hol77}.
\end{proof}

Having Corollary \ref{cor:NG} at hand, we finish the proof of Theorem \ref{thm:bog} by applying  \cite[Lemma 8]{Hol77} and repeating the argument following it.

\end{document}